\DeclareMathOperator{\charac}{char}
\DeclareMathOperator{\Gal}{Gal}
\newcommand{\C}{{\mathbb C}}
\newcommand{\Q}{{\mathbb Q}}
\newcommand{\R}{{\mathbb R}}
\newcommand{\Z}{{\mathbb Z}}
\newcommand{\gerp}{\mathfrak{p}}
\newcommand{\gerP}{\mathfrak{P}}
\newcommand\ab{{\mathrm{ab}}}
\newcommand\tho{{\text{th}}}
\newcommand{\norm}{\mathcal{N}}
\newcommand{\DD}{\mathcal{D}}
\newcommand{\calS}{\mathcal{S}}
\newtheorem{theorem}{Theorem}[section]
\newtheorem{proposition}[theorem]{Proposition}
\newtheorem{corollary}[theorem]{Corollary}
\newtheorem{lemma}[theorem]{Lemma}
\newtheorem{remark}[theorem]{Remark}
\numberwithin{equation}{section}
\title{The Chevalley-Bass Theorem}
\author{Yuri Bilu\footnote{Institut de Mathématiques de Bordeaux, Université de Bordeaux \& CNRS; \textsf{yuri@math.u-bordeaux.fr}}}
\date{version of \today}
\renewcommand*\l@section[2]{
	\ifnum \c@tocdepth >\z@
	\addpenalty\@secpenalty
	\addvspace{0.2em \@plus\p@}
	\setlength\@tempdima{1.5em}
	\begingroup
	\parindent \z@ \rightskip \@pnumwidth
	\parfillskip -\@pnumwidth
	\leavevmode \bfseries
	\advance\leftskip\@tempdima
	\hskip -\leftskip
	#1\nobreak\hfil \nobreak\hb@xt@\@pnumwidth{\hss #2}\par
	\endgroup
	\fi}
\begin{document}

\hfuzz=4pt

	\maketitle

\begin{abstract}
This is an exposition of a theorem due to Chevalley (1951) and Bass (1965). Let~$K$ be a finitely generated field of characteristic~$0$. Then there exists a positive integer~$\Lambda$, depending only on~$K$, such that, for every positive integer~$n$, the following holds: if ${\alpha\in K}$ is a $\Lambda n^\tho$ power in the cyclotomic extension $K(\zeta_{\Lambda n})$, then~$\alpha$ is an $n^\tho$  power in~$K$. 

We also give explicit expressions for a suitable~$\Lambda$ of two kinds: one in terms of the degree of the maximal abelian subfield~$K_\ab$ of~$K$, the other in terms of the discriminant of $K_\ab$. 
\end{abstract}

	{\footnotesize
		
		\tableofcontents
		
	}

\section{Introduction}

Let~$K$ be a number field and~$n$ a positive integer. Many Diophantine problems depend on the following question: assume that ${\alpha\in K}$ is an $n^\tho$ power in the cyclotomic extension $K(\zeta_n)$; is it true that it is  an $n^{\text{th}}$ power in~$K$? In symbols: is it true that ${K(\zeta_n)^n\cap K=K^n}$? 

This is wrong in general: ${-4=(1+i)^4}$ is a $4^{\text{th}}$ power in $\Q(i)$ but not in~$\Q$. However, a slightly weaker statement holds true. For a field~$K$ of characteristic~$0$ we denote by $K_\ab$ the maximal abelian subfield of~$K$; that is, the biggest subfield of~$K$ which is an abelian extension of~$\Q$.

\begin{theorem}
\label{thbass}
[Chevalley, Bass]
Let~$K$ be a field of characteristic~$0$ with the property 
\begin{equation}
\label{ehypab}
\text{$K_\ab$ is a finite extension of~$\Q$}. 
\end{equation}
Then there exists a positive integer~$\Lambda$ such that for every positive integer~$n$ the following holds: if ${\alpha\in K}$ is a $\Lambda n^\tho$ power in  $K(\zeta_{\Lambda n})$, then~$\alpha$ is an $n^\tho$  power in~$K$.   In symbols: ${K(\zeta_{\Lambda n})^{\Lambda n}\cap K\subset K^n}$. 
\end{theorem}

Note that hypothesis ${[K_\ab:\Q]<\infty}$ holds trivially when~$K$ is a number field. More generally, it holds when~$K$ is a finitely generated field, see Exercise~4 in \cite[Chapter~VIII]{La02}.


The case of prime power~$n$  is due to Chevalley~\cite{Ch51}, and does not require hypothesis~\eqref{ehypab}. Moreover, if we restrict to odd prime power~$n$, then we may take ${\Lambda=1}$, see Section~\ref{sch} below. The same applies when~$n$ is a power of~$2$ under the additional hypothesis ${i\in K}$.   

Bass~\cite{Ba65} extended the work of Chevalley to arbitrary~$n$; his stated result is formally weaker than Theorem~\ref{thbass}, but what he proves is exactly Theorem~\ref{thbass}. Smith~\cite{Sm70} gave a very succinct proof of the Theorem of Bass and corrected some inaccuracies. In Sections~\ref{skey} and~\ref{sbass} we follow Smith's argument, adding some details. 

Note that both Bass and Smith assumed~$K$ a number field. We extended their argument to the more general case of a field satisfying~\eqref{ehypab}, using a suggestion of Georges Gras~\cite{Gr23}. 

This article is purely expository, no really new ideas are involved. The results of Section~\ref{schba} are, formally, new, but they are deduced from the results of the previous sections, using very standard arguments.

\paragraph{Acknowledgments} I am indebted to Keith Conrad for introducing me to the topic, and showing me the articles of Chevalley, Bass and Smith. I am most grateful to Georges Gras for inspiring comments on an early version of this note, and many illuminating discussions.  I thank Denis Benois for  helpful suggestions, and Peter Müller for showing me  
articles~\cite{AG89} and~\cite{KP89}.  

The computations in Section~\ref{schba} were performed using \textsf{PARI}~\cite{pari}. I thank the \textsf{PARI}  team, and Bill Allombert in particular, for their availability. 

I worked on this note during my stay at MPIM Bonn in June 2023; I thank this institute for financial support and stimulating working conditions. 

This work was partially supported by the ANR project JINVARIANT.

\paragraph{Notation and conventions}
In this note~$K$ is a field 
and~$\bar K$ is a fixed algebraic closure of~$K$. Given ${\alpha \in K}$ and a positive integer~$n$, we denote by $\alpha K^n$ the set of $n^\tho$ powers in~$K$ multiplied by~$\alpha$:
$$
\alpha K^n:= \{\alpha\beta^n: \beta \in K\}. 
$$
For a positive integer~$n$, not divisible by the characteristic of~$K$,  we let ${\zeta_n\in \bar K}$ be a primitive root of unity of order~$n$. 
We denote by~$\mu_n$ the group of roots of unity of order~$n$, viewed as a subgroup of $\bar K^\times$ generated by~$\zeta_n$. We use the (slightly abusive) notation ${i=\sqrt{-1}=\zeta_4}$. 

Everywhere throughout the text~$p$ denotes a prime number, and $m,n,r,s$ denote strictly positive integers. 

\section{An Irreducibility Theorem}

Recall the following well-known irreducibility criterion for  binomials; see,  for instance, Theorem~9.1  in~\cite[Chapter~VI]{La02}. 

\begin{theorem}
\label{thlang}
Let~$K$ be a field, ${\alpha\in K^\times}$ and~$n$  a positive integer. Assume 
that for all  ${p\mid n}$ we have ${\alpha\notin K^p}$. If ${4\mid n}$ then we assume, in addition, that ${\alpha\notin -4K^4}$. Then the polynomial ${X^n - \alpha}$ is irreducible in $K[X]$. 
\end{theorem}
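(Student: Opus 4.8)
The plan is to prove the theorem in three stages --- the prime exponent case, then prime powers, then general~$n$ via a compositum argument --- with the prime-power claim established by induction on the exponent, uniformly over all base fields. First I would settle the base case: for a prime~$p$, if ${\alpha\notin K^p}$ then ${X^p-\alpha}$ is irreducible. Taking a root ${\beta\in\bar K}$, the remaining roots are ${\omega\beta}$ with ${\omega^p=1}$, so any nontrivial factor ${g}$ of degree~${d}$ with ${0<d<p}$ has constant term equal to ${\beta^d}$ up to sign and a $p^\tho$ root of unity; raising to the $p^\tho$ power gives ${\alpha^d\in K^p}$. Since ${\gcd(d,p)=1}$, writing ${1=ud+vp}$ yields ${\alpha=(\alpha^d)^u(\alpha^p)^v\in K^p}$, a contradiction. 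This argument is insensitive to the characteristic; when ${\charac K=p}$ every ${\omega}$ equals~$1$ and it goes through verbatim.

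For the prime-power case ${n=p^r}$ I would argue by induction on~$r$. Put ${\gamma:=\beta^{p^{r-1}}}$ where ${\beta^{p^r}=\alpha}$, so ${\gamma^p=\alpha}$; by the base case ${X^p-\alpha}$ is irreducible and ${L:=K(\gamma)}$ has degree~$p$ over~$K$. Since ${\beta^{p^{r-1}}=\gamma}$, the element~$\beta$ is a root of ${X^{p^{r-1}}-\gamma\in L[X]}$, and if this is irreducible over~$L$ then ${[K(\beta):K]=p^{r-1}\cdot p=p^r}$, forcing ${X^{p^r}-\alpha}$ (a degree-$p^r$ polynomial with root~$\beta$) to be irreducible. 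To apply the inductive hypothesis over~$L$ I must verify ${\gamma\notin L^p}$, and, when ${p=2}$ and ${r\ge 3}$ (so that ${4\mid p^{r-1}}$), also ${\gamma\notin -4L^4}$.

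For odd~$p$ this is painless: the minimal polynomial of~$\gamma$ is ${X^p-\alpha}$, so ${\mathrm{N}_{L/K}(\gamma)=(-1)^{p+1}\alpha=\alpha}$, whence ${\gamma\in L^p}$ would give ${\alpha=\mathrm{N}_{L/K}(\gamma)\in K^p}$; and the $-4$ condition never arises. The main obstacle, and the whole reason for the $-4$ hypothesis, is the case ${p=2}$, which I would handle by comparing coordinates in the $K$-basis ${\{1,\gamma\}}$ of~$L$. Writing a hypothetical ${\delta=u+v\gamma}$ with ${\delta^2=\gamma}$ and expanding via ${\gamma^2=\alpha}$ gives ${2uv=1}$ and ${u^2+v^2\alpha=0}$, which forces ${\alpha=-4u^4\in -4K^4}$, contradiction; hence ${\gamma\notin L^2}$. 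An entirely analogous expansion of~${\delta^4}$ for a hypothetical~$\delta$ with ${-4\delta^4=\gamma}$ yields two scalar equations that again force ${\alpha\in -4K^4}$, giving ${\gamma\notin -4L^4}$. Thus the induction closes, and I expect the bookkeeping in this $p=2$ expansion to be the most delicate point.

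Finally, for general ${n=\prod_i p_i^{e_i}}$ I would reduce to the prime-power case. Let ${\beta^n=\alpha}$ and set ${\beta_i:=\beta^{n/p_i^{e_i}}}$, so ${\beta_i^{p_i^{e_i}}=\alpha}$; the hypotheses on~$\alpha$ transfer to each prime power (the $-4$ condition for ${p_i=2}$ being exactly needed when ${4\mid n}$), so the previous step gives ${[K(\beta_i):K]=p_i^{e_i}}$. As these degrees are pairwise coprime and each ${K(\beta_i)\subseteq K(\beta)}$, the compositum has degree ${\prod_i p_i^{e_i}=n}$ over~$K$ and lies inside ${K(\beta)}$; since ${[K(\beta):K]\le n}$, we conclude ${[K(\beta):K]=n}$, so ${X^n-\alpha}$ is the minimal polynomial of~$\beta$ and is irreducible.
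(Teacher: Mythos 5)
The paper does not actually prove Theorem~\ref{thlang}: it is quoted as a known criterion with a reference to Lang, so there is no in-paper argument to compare against. Your proof is correct, and it is essentially the standard textbook argument behind that citation: first the prime case via the constant term of a putative factor together with ${\gcd(d,p)=1}$; then prime powers ${p^r}$ by induction on~$r$ over a varying base field, adjoining a $p^\tho$ root~$\gamma$ of~$\alpha$ and checking that the hypotheses persist for ${X^{p^{r-1}}-\gamma}$ over ${L=K(\gamma)}$ (norm argument for odd~$p$, explicit computation in the basis ${\{1,\gamma\}}$ for ${p=2}$); finally general~$n$ by coprimality of the degrees ${[K(\beta_i):K]=p_i^{e_i}}$ inside ${K(\beta)}$. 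The one step you flag but do not carry out, ruling out ${\gamma\in-4L^4}$, does close as you predict: writing ${\delta=u+v\gamma}$ and ${\delta^2=A+B\gamma}$ with ${A=u^2+\alpha v^2}$, ${B=2uv}$, the condition ${-4\delta^4=\gamma}$ gives ${A^2+\alpha B^2=0}$ and ${-8AB=1}$; the first yields ${\alpha=-(A/B)^2}$, and the second gives ${-8(A/B)=B^{-2}\in K^2}$, say ${-8(A/B)=f^2}$, whence ${\alpha=-f^4/64=-4(f/4)^4\in-4K^4}$, the desired contradiction. (Note that a norm computation alone would only give ${\alpha=-16\,\mathrm{N}_{L/K}(\delta)^4}$, which is \emph{not} in general in ${-4K^4}$ --- e.g.\ ${\alpha=-16}$ over~$\Q$ --- so the coordinate-wise computation, or the extra observation that ${\sqrt2\in K}$ is forced, really is needed here.) The characteristic-$p$ degenerations you mention are also handled correctly, since in characteristic~$2$ the cross term ${2uv\gamma}$ vanishes and ${-4K^4=\{0\}}$.
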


We will use this theorem through the following consequences.

\begin{corollary}
\label{cps}
Let~$K$ be a field and~$p$ a prime number. If ${p=2}$ and ${\charac K\ne 2}$ then assume that ${i\in K}$. Let ${\alpha \in K}$, but ${\alpha \notin K^p}$. Then for any positive integer~$s$ the polynomial ${X^{p^s}-\alpha}$ is irreducible in $K[X]$. 
\end{corollary}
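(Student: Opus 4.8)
The plan is to deduce this directly from Theorem~\ref{thlang}, applied with ${n=p^s}$. First observe that ${\alpha\neq 0}$: since ${0=0^p\in K^p}$, the hypothesis ${\alpha\notin K^p}$ forces ${\alpha\in K^\times}$, which is exactly what Theorem~\ref{thlang} requires of its coefficient. Next, the only prime dividing ${n=p^s}$ is~$p$ itself, so the main condition of Theorem~\ref{thlang}, namely that ${\alpha\notin K^q}$ for every prime ${q\mid n}$, is precisely the hypothesis ${\alpha\notin K^p}$. Hence, apart from the exceptional clause, irreducibility is immediate.

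It remains to verify the extra hypothesis ${\alpha\notin -4K^4}$ of Theorem~\ref{thlang}, which is active only when ${4\mid n}$, that is, when ${p=2}$ and ${s\geq 2}$. This is the sole case in which the special assumptions on~${p=2}$ are needed, and checking it is the only nontrivial point. I would split according to the characteristic. If ${\charac K=2}$, then ${-4=0}$, so ${-4K^4=\{0\}}$; since ${\alpha\neq 0}$ we get ${\alpha\notin -4K^4}$, and no assumption on~$i$ is required, consistently with the statement.

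If ${\charac K\neq 2}$, then by hypothesis ${i\in K}$, and here I would use the identity ${(1+i)^4=-4}$, which follows from ${(1+i)^2=2i}$ together with ${(2i)^2=-4}$. Thus ${-4\in K^4}$, and since ${1+i\neq 0}$ the map ${\beta\mapsto(1+i)\beta}$ is a bijection of~$K$, giving ${-4K^4=K^4}$. The required condition therefore reduces to ${\alpha\notin K^4}$. But every fourth power is a square, so ${K^4\subseteq K^2}$, and the hypothesis ${\alpha\notin K^2=K^p}$ immediately yields ${\alpha\notin K^4}$. This completes the verification, and Theorem~\ref{thlang} delivers the irreducibility of ${X^{p^s}-\alpha}$.

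The argument presents no genuine obstacle; the only point requiring care is recognizing that the exceptional case ${\alpha\in -4K^4}$ of the binomial criterion is exactly the configuration that the hypothesis ${i\in K}$ rules out, via the identity ${(1+i)^4=-4}$. One should also keep an eye on the degenerate sub-case ${p=2}$, ${s=1}$, where ${4\nmid n}$ and the exceptional clause simply does not arise.
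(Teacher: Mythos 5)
Your proof is correct and follows essentially the same route as the paper: reduce to Theorem~\ref{thlang} with ${n=p^s}$ and dispose of the exceptional clause ${\alpha\notin-4K^4}$ via the identity ${-4=(1+i)^4\in K^4}$, so that ${\alpha\notin K^2}$ already rules it out. Your explicit treatment of the case ${\charac K=2}$ (where ${-4K^4=\{0\}}$) is a detail the paper leaves implicit, but the argument is the same.
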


\begin{corollary}
\label{crootgal}
Let~$K$ be a field and~$p$ a prime number distinct from the characteristic of~$K$. Let ${r\ge 2}$ be such that ${\zeta_{p^{r-1}}\in K}$, but ${\zeta_{p^{r}}\notin K}$. Then $K(\zeta_{p^r})$ is a Galois extension of~$K$ of degree~$p$. 
\end{corollary}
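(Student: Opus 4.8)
The plan is to realize $K(\zeta_{p^r})$ as the splitting field of a single irreducible binomial of degree~$p$. Since ${r\ge 2}$, the group $\mu_{p^{r-1}}$ is contained in~$K$ (it is generated by ${\zeta_{p^{r-1}}\in K}$), and $\zeta_{p^r}^p$ is a primitive $p^{r-1}\tho$ root of unity, hence lies in ${\mu_{p^{r-1}}\subseteq K}$. Setting ${\alpha:=\zeta_{p^r}^p\in K}$, the element $\zeta_{p^r}$ is a root of ${X^p-\alpha\in K[X]}$, so everything reduces to proving that this binomial is irreducible over~$K$.

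The key step, and the main obstacle, is to verify the hypothesis ${\alpha\notin K^p}$. Suppose instead that ${\alpha=\gamma^p}$ for some ${\gamma\in K}$. Then ${\gamma^{p^r}=(\gamma^p)^{p^{r-1}}=\alpha^{p^{r-1}}=1}$, so ${\gamma\in\mu_{p^r}}$; and since ${\gamma^p=\alpha}$ has order exactly $p^{r-1}$, a short computation with orders forces $\gamma$ to have order exactly $p^r$, i.e.\ $\gamma$ is a \emph{primitive} $p^r\tho$ root of unity. But then ${\mu_{p^r}=\langle\gamma\rangle\subseteq K}$, contradicting ${\zeta_{p^r}\notin K}$. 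Hence ${\alpha\notin K^p}$.

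With ${\alpha\notin K^p}$ in hand, I would apply the irreducibility criterion. Here I prefer to invoke Theorem~\ref{thlang} directly with ${n=p}$ rather than Corollary~\ref{cps}: since $p$ is prime we have ${4\nmid p}$, so the only hypothesis to check is ${\alpha\notin K^p}$, and no assumption on~$i$ is needed. (Corollary~\ref{cps} would require ${i\in K}$ when ${p=2}$, which fails precisely in the case ${p=2}$, ${r=2}$; routing through Theorem~\ref{thlang} sidesteps this edge case uniformly.) Thus $X^p-\alpha$ is irreducible, $\zeta_{p^r}$ has degree~$p$ over~$K$, and ${[K(\zeta_{p^r}):K]=p}$.

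Finally, for the Galois property: the $p$ roots of $X^p-\alpha$ are $\zeta_{p^r}\omega$ with ${\omega\in\mu_p}$, and ${\mu_p\subseteq\mu_{p^{r-1}}\subseteq K}$ since ${r\ge 2}$; hence all roots lie in $K(\zeta_{p^r})$, which is therefore the splitting field of $X^p-\alpha$ over~$K$. Because ${p\ne\charac K}$, this polynomial is separable (its $p$ roots are distinct), so $K(\zeta_{p^r})/K$ is a splitting field of a separable polynomial, hence Galois of degree~$p$, as required.
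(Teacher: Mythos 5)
Your proof is correct and follows exactly the route the paper intends: Corollary~\ref{crootgal} is stated there as a ``straightforward consequence'' of Theorem~\ref{thlang}, and you supply precisely the missing details --- taking ${\alpha=\zeta_{p^r}^p\in K}$, showing ${\alpha\notin K^p}$ via the order argument, and applying Theorem~\ref{thlang} with ${n=p}$ (so that the ${4\mid n}$ clause is vacuous and no hypothesis on~$i$ is needed). Your observation that one should invoke Theorem~\ref{thlang} directly rather than Corollary~\ref{cps} is exactly the right way to handle the case ${p=2}$, ${r=2}$.
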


Both corollaries are straightforward consequences of Theorem~\ref{thlang}, though, perhaps, 
 the case ${p=2}$ and ${\charac K\ne 2}$ of Corollary~\ref{cps} needs some explanation.    In this case ${-4=(1+i)^4\in K^4}$. Hence ${\alpha \notin K^2}$ implies ${\alpha\notin -4K^4}$, and Theorem~\ref{thlang} applies.

\section{The Prime Power Case}
\label{sch}
In this section we prove the Theorem of Chevalley.

\begin{theorem}[Chevalley]
\label{thch}
Let~$K$ be a field and~$p$ a prime number distinct from the characteristic of~$K$. 
Let~$n$ be a power of~$p$. Assume that ${i\in K}$ if ${p=2}$. Let ${\alpha\in K^\times}$ be such that ${\alpha \in K(\zeta_n)^n}$. Then  ${\alpha\in K^n}$. In symbols:
${K(\zeta_n)^n\cap K=K^n}$. 
\end{theorem}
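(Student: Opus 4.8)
My plan is to prove the statement by Galois descent (a Hilbert~90 type argument), reducing everything to a single cohomological vanishing. Put ${L=K(\zeta_{p^s})}$ and ${G=\Gal(L/K)}$; if ${L=K}$ there is nothing to prove, so assume ${L\ne K}$. The first point is that $G$ is cyclic: the cyclotomic character embeds ${G\hookrightarrow(\Z/p^s\Z)^\times}$, which is cyclic when $p$ is odd; and when ${p=2}$ the hypothesis ${i\in K}$ forces the image into the subgroup ${1+4\Z/2^s\Z}$ of elements ${\equiv1\pmod4}$, which is again cyclic. (Alternatively, Corollary~\ref{crootgal} climbs the cyclotomic tower above $K$ in steps of degree $p$.) Fix a generator $\sigma$ of $G$ and let ${u\in(\Z/p^s\Z)^\times}$ be determined by ${\sigma(\zeta)=\zeta^{u}}$ for ${\zeta\in\mu_{p^s}}$; write ${d=|G|}$.

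Now write ${\alpha=\gamma^{p^s}}$ with ${\gamma\in L}$. Since ${\alpha\in K}$, every ${\tau\in G}$ satisfies ${\tau(\gamma)^{p^s}=\tau(\alpha)=\alpha=\gamma^{p^s}}$, so ${\tau(\gamma)/\gamma\in\mu_{p^s}}$. Set ${x=\sigma(\gamma)/\gamma\in\mu_{p^s}}$; since ${\sigma^d=\mathrm{id}}$, the product telescopes to the norm relation ${\prod_{j=0}^{d-1}\sigma^j(x)=\sigma^d(\gamma)/\gamma=1}$. The crux is to produce ${\delta\in\mu_{p^s}}$ with ${\sigma(\delta)/\delta=x}$: granting this, ${\gamma\delta^{-1}}$ is fixed by $\sigma$, hence by all of $G$, so it lies in ${L^{G}=K}$, and since ${\delta^{p^s}=1}$ we conclude ${\alpha=\gamma^{p^s}=(\gamma\delta^{-1})^{p^s}\in K^{p^s}}$, as desired.

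Solving ${\sigma(\delta)/\delta=x}$ in $\mu_{p^s}$ subject to ${\prod_j\sigma^j(x)=1}$ is exactly the assertion that ${H^1(G,\mu_{p^s})=0}$, and this is the main obstacle. Identifying ${\mu_{p^s}\cong\Z/p^s\Z}$ with $\sigma$ acting by multiplication by $u$, it becomes the arithmetic identity ${\ker(\nu\cdot)=(u-1)\Z/p^s\Z}$, where ${\nu=1+u+\cdots+u^{d-1}}$; one inclusion is automatic from ${u^d\equiv1}$. If ${\zeta_p\notin K}$ then ${u\not\equiv1\pmod p}$, so ${u-1}$ is a unit, ${(u-1)\Z/p^s\Z}$ is everything, and the identity is trivial. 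If ${\zeta_p\in K}$ then ${w:=v_p(u-1)\ge1}$, the order of $u$ is the $p$-power ${d=p^{\,s-w}}$, and lifting the exponent gives ${v_p(u^d-1)=w+v_p(d)=s}$, hence ${v_p(\nu)=s-w}$ and both sides of the identity equal ${p^{w}\Z/p^s\Z}$. For ${p=2}$ the lifting-the-exponent formula requires ${w\ge2}$, which is guaranteed precisely by ${i\in K}$. This is where all the hypotheses are consumed: that ${p\ne\charac K}$ (so that $\mu_{p^s}$ has order $p^s$ and the cyclotomic character is defined) and that ${i\in K}$ when ${p=2}$. The necessity of the latter is visible in the failure ${-4=(1+i)^4}$: over ${K=\Q}$ with ${p=2}$, ${s=2}$ one has ${u=3}$, ${w=1}$, and ${\ker(\nu\cdot)=\Z/4\Z\ne 2\,\Z/4\Z=(u-1)\Z/4\Z}$, so the cocycle does not split.

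Finally, I note that the provided irreducibility results give an alternative, more computational route: Corollary~\ref{cps} shows that for ${\beta\notin K^p}$ the field ${K(\beta^{1/p^m})}$ has degree $p^m$ over $K$, and one can run a degree-counting argument after reducing, by extracting the largest $p$-th power dividing $\alpha$ and adjusting roots by elements of $\mu_{p^s}$, to the case ${\alpha\notin K^p}$. I prefer the descent argument above because it is uniform across all cases and isolates the single arithmetic fact responsible for the theorem.
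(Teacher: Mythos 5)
Your proof is correct, and it takes a genuinely different route from the paper. The paper proves the stronger Theorem~\ref{thchbis} by induction on~$r$, descending one layer of the cyclotomic tower at a time: the base case is a norm-plus-gcd argument over $K(\zeta_p)/K$, and the inductive step uses Corollary~\ref{crootgal} (hence the binomial irreducibility theorem) to get a degree-$p$ Galois step $K(\zeta_{p^r})/K(\zeta_{p^{r-1}})$, the congruence ${1+g+\cdots+g^{p-1}\equiv p\pmod{p^2}}$ to force ${\sigma(\beta)/\beta\in\mu_p}$, and then the same ``adjust the root by a root of unity'' trick you use, but only modulo $\mu_p$ at each level. You instead perform the descent in a single step over the full extension $K(\zeta_{p^s})/K$, observing that the Galois group is cyclic (here is where ${i\in K}$ enters for ${p=2}$, to land in the cyclic subgroup ${1+4\Z/2^s\Z}$), and reducing everything to the explicit vanishing of $H^1\bigl(G,\mu_{p^s}\bigr)$, verified by the valuation computation ${v_p(\nu)=s-w}$ via lifting the exponent (again ${i\in K}$ is consumed, as ${w\ge 2}$ when ${p=2}$). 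Your computations check out: the faithfulness of $G$ on $\mu_{p^s}$ identifies $d$ with the order of $u$, the case ${\zeta_p\notin K}$ makes ${u-1}$ a unit, and the case ${w\ge s}$ is excluded by ${L\ne K}$. What your approach buys is uniformity and independence from Theorem~\ref{thlang} and its corollaries; it also isolates cleanly, via the ${u=3}$, ${s=2}$ computation, why ${-4}$ is the obstruction. What the paper's approach buys is the sharper statement~\ref{thchbis} with separated field level~$r$ and exponent~$s$, which is what is actually invoked later in the proof of Lemma~\ref{lsm}; your argument as written only covers ${r=s}$, which is all that Theorem~\ref{thch} asserts, but you would need to rework the cocycle bookkeeping (with $\gamma$ constrained to the subfield $K(\zeta_{p^r})$) to recover the general case.
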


Writing ${n=p^r}$, we need to prove that ${\alpha \in K(\zeta_{p^r})^{p^r}}$ implies ${\alpha\in K^{p^r}}$. It will be more convenient to separate the field and the exponent, proving the following more general statement. 
\begin{theorem}
\label{thchbis}
Let~$K$ and~$p$ be as in Theorem~\ref{thch}; in particular, we assume that ${i\in K}$ if ${p=2}$.
Let ${s\ge r\ge 1}$; then ${\alpha \in K(\zeta_{p^r})^{p^s}}$ implies that ${\alpha\in K^{p^s}}$. In symbols:
${K(\zeta_{p^r})^{p^s}\cap K=K^{p^s}}$. 
\end{theorem}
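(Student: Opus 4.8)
The plan is to isolate the $p$-part of the cyclotomic extension and then run a descent on the exact power of~$p$ dividing~$\alpha$. First I would record the elementary \emph{norm lemma}: if $M/F$ is a finite extension with ${\gcd([M:F],n)=1}$ and ${\beta\in F\cap M^n}$, then ${\beta\in F^n}$. Indeed, writing ${\beta=\delta^n}$ with ${\delta\in M}$ and applying $N_{M/F}$ gives ${\beta^{[M:F]}=N_{M/F}(\delta)^n}$, and a Bézout relation ${1=u[M:F]+vn}$ yields ${\beta=\bigl(N_{M/F}(\delta)^u\beta^v\bigr)^n}$. Using this I would reduce to the case ${\zeta_p\in K}$: let $K'$ be the fixed field of a Sylow $p$-subgroup of the abelian group ${\Gal(K(\zeta_{p^r})/K)}$, so that $[K':K]$ is prime to~$p$ while $[K(\zeta_{p^r}):K']$ is a power of~$p$. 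Since ${K'(\zeta_p)\subseteq K(\zeta_{p^r})}$ has degree over $K'$ dividing both $p-1$ and a power of~$p$, we get ${\zeta_p\in K'}$ (and ${i\in K'}$ if ${p=2}$, as ${i\in K}$). If I prove the theorem over $K'$, the norm lemma with ${M=K'}$, ${F=K}$, ${n=p^s}$ descends the conclusion back to~$K$.

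Assume now ${\zeta_p\in K}$, and let ${r_0\ge1}$ be maximal with ${\zeta_{p^{r_0}}\in K}$. By Corollary~\ref{crootgal} each step of the cyclotomic tower over~$K$ has degree~$p$; moreover ${\Gal(K(\zeta_{p^s})/K)}$ embeds in ${(\Z/p^s\Z)^\times}$, and using ${\zeta_p\in K}$ (and ${i\in K}$ when ${p=2}$) one checks it is cyclic of order ${p^{s-r_0}}$. Suppose for contradiction that ${\alpha\in K\cap K(\zeta_{p^r})^{p^s}}$ but ${\alpha\notin K^{p^s}}$, and let ${a\ge0}$ be largest with ${\alpha\in K^{p^a}}$; then ${a<s}$ and I write ${\alpha=y^{p^a}}$ with ${y\notin K^p}$. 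By Corollary~\ref{cps} the binomial ${X^{p^{s-a}}-y}$ is irreducible over~$K$, so a root~$\eta$ satisfies ${[K(\eta):K]=p^{s-a}}$ and ${\eta^{p^s}=\alpha}$. As ${\alpha\in K(\zeta_{p^r})^{p^s}}$, we have ${\eta^{p^s}=\delta^{p^s}}$ for some ${\delta\in K(\zeta_{p^r})}$, whence ${\eta/\delta\in\mu_{p^s}}$ and ${\eta\in K(\zeta_{p^r})(\zeta_{p^s})=K(\zeta_{p^s})}$ (this is exactly where ${s\ge r}$ enters). Thus $K(\eta)$ is a subextension of the cyclic extension ${K(\zeta_{p^s})/K}$ of degree ${p^{s-a}}$; comparing with the degrees ${p^{j-r_0}}$ of the intermediate cyclotomic fields ${K(\zeta_{p^j})}$ forces ${a\ge r_0}$ and ${K(\eta)=K(\zeta_{p^M})}$ with ${M=s-a+r_0}$.

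The crux is the final descent. Set ${\gamma=\eta^{p^{s-a-1}}}$, so ${\gamma^p=y}$ and ${\gamma\in K(\zeta_{p^M})}$. Since ${y\notin K^p}$ we have ${\gamma\notin K}$, so $K(\gamma)$ is the unique degree-$p$ subfield of the cyclic extension ${K(\zeta_{p^M})/K}$, namely ${K(\zeta_{p^{r_0+1}})}$. Now ${K(\zeta_{p^{r_0+1}})=K(\zeta_{p^{r_0}}^{1/p})}$ because ${\zeta_{p^{r_0+1}}^p=\zeta_{p^{r_0}}\in K}$, so by Kummer theory (legitimate since ${\zeta_p\in K}$) the two degree-$p$ radical generators $y$ and $\zeta_{p^{r_0}}$ define the same class modulo ${(K^\times)^p}$: one has ${y=\zeta_{p^{r_0}}^{\,j}u^p}$ for some ${u\in K}$ and ${1\le j\le p-1}$. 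Raising to the power~$p^a$ and using ${a\ge r_0}$, the root of unity dies, ${\zeta_{p^{r_0}}^{\,jp^a}=1}$, so ${\alpha=y^{p^a}=u^{p^{a+1}}\in K^{p^{a+1}}}$, contradicting the maximality of~$a$. The main obstacle is precisely this Kummer identification: one must recognize the $p$-th root $\gamma$ as generating the cyclotomic subfield ${K(\zeta_{p^{r_0+1}})}$ in order to pin $y$ down up to a root of unity which the inequality ${a\ge r_0}$ then annihilates.
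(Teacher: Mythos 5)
Your proof is correct, but it follows a genuinely different route from the paper's. The paper proves Theorem~\ref{thchbis} by induction on the cyclotomic level~$r$: assuming ${\zeta_{p^r}\notin K(\zeta_{p^{r-1}})}$, it picks ${\sigma\in\Gal\bigl(K(\zeta_{p^s})/K(\zeta_{p^{r-1}})\bigr)}$ restricting to a generator of $\Gal\bigl(K(\zeta_{p^r})/K(\zeta_{p^{r-1}})\bigr)$, shows via the congruence ${1+g+\cdots+g^{p-1}\equiv p\pmod{p^2}}$ that ${\sigma(\beta)/\beta\in\mu_p}$ for a $p^s$th root~$\beta$ of~$\alpha$, and twists~$\beta$ by a root of unity to push it down into $K(\zeta_{p^{r-1}})$; the base case ${r=1}$ is the norm argument. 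You instead first reduce to ${\zeta_p\in K}$ by a Sylow-plus-norm argument (which subsumes the paper's base case), and then run a single extremal descent on the largest~$a$ with ${\alpha\in K^{p^a}}$: binomial irreducibility (Corollary~\ref{cps}) pins down ${[K(\eta):K]=p^{s-a}}$, the cyclicity of $\Gal(K(\zeta_{p^s})/K)$ identifies $K(\eta)$ and then $K(\gamma)$ with explicit layers of the cyclotomic tower, and Kummer theory converts ${K(\gamma)=K(\zeta_{p^{r_0+1}})}$ into ${y=\zeta_{p^{r_0}}^{\,j}u^p}$, which the inequality ${a\ge r_0}$ then annihilates. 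Your route needs two extra standard inputs --- the exact cyclic structure of $\Gal(K(\zeta_{p^s})/K)$ over a field already containing $\zeta_{p^{r_0}}$ (with ${r_0\ge 2}$ when ${p=2}$, which is precisely where ${i\in K}$ enters), and degree-$p$ Kummer theory --- but in exchange it avoids the induction on~$r$ and makes the obstruction completely explicit; both arguments ultimately rest on Theorem~\ref{thlang}. Two small points worth a sentence in a full write-up: the maximal~$r_0$ and the maximal~$a$ exist because otherwise the statement is trivially true (respectively ${\alpha\in K^{p^s}}$ already), and the claim that the image of $\Gal(K(\zeta_{p^s})/K)$ in $(\Z/p^s\Z)^\times$ is the full congruence subgroup ${\{g\equiv 1\pmod{p^{r_0}}\}}$ deserves the one-line justification that any smaller image would fix $\zeta_{p^{r_0+1}}$.
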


\begin{proof} 
We use induction in~$r$. 
In the case ${r=1}$  the proof is very easy. Write ${\alpha=\beta^{p^s}}$ for some ${\beta\in K(\zeta_p)}$. Taking the norms, we obtain ${\alpha^d=\gamma^{p^s}}$, where ${d=[K(\zeta_p):K]}$ and ${\gamma=\norm_{K(\zeta_p)/K}\beta}$. 
Since ${d\le p-1}$, we have ${\gcd(p^s,  d)=1}$. Let ${u,v\in \Z}$ be such that ${ud+vp^s=1}$. Then
${\alpha=\alpha^{ud+vp^s}=(\gamma^u\alpha^v)^{p^s}\in K^{p^s}}, $
as wanted.

Now assume that ${r\ge 2}$. If ${p=2}$ then ${\zeta_{4}\in K}$ by the hypothesis. This means that, in the case ${p^r=4}$, there is nothing to prove. Thus, when ${p=2}$, we may assume that ${r\ge 3}$.

We are assuming that ${\alpha \in K(\zeta_{p^r})^{p^s}}$, and want to prove that 
\begin{equation}
\label{ealin}
\alpha \in K(\zeta_{p^{r-1}})^{p^s}.
\end{equation}
By induction, it would then follow that ${\alpha\in K^{p^s}}$, as wanted. 

If ${\zeta_{p^r}\in  K(\zeta_{p^{r-1}})}$ then there is nothing to prove. Hence we assume that ${\zeta_{p^r}\notin  K(\zeta_{p^{r-1}})}$. Corollary~\ref{crootgal} implies now that $K(\zeta_{p^{r}})$ is a Galois extension of $K(\zeta_{p^{r-1}})$ of degree~$p$. 

The field $K(\zeta_{p^{s}})$ is also a Galois extension of $K(\zeta_{p^{r-1}})$. Let~$\sigma$ be an element in  $\Gal\bigl(K(\zeta_{p^{s}})/K(\zeta_{p^{r-1}})\bigr)$ such that  the restriction  $\sigma\vert_{K(\zeta_{p^{r}})}$ generates the Galois group of $K(\zeta_{p^{r}})$ over $K(\zeta_{p^{r-1}})$. We have ${\sigma(\zeta_{p^s})=\zeta_{p^s}^g}$ for some ${g\in \Z}$, not divisible by~$p$. Since ${\sigma(\zeta_{p^{r-1}})=\zeta_{p^{r-1}}}$, we must have ${g\equiv 1\pmod {p^{r-1}}}$. In particular,
\begin{equation}
\label{econgg}
g\equiv 
\begin{cases}
1\pmod {p}, &\text{if $p\ge 3$},\\
1\pmod 4,&\text{if ${p=2}$}.
\end{cases} 
\end{equation}

Write ${\alpha=\beta^{p^s}}$ for some ${\beta \in K(\zeta_{p^r})}$. We make the following two observations.

\begin{enumerate}
\item
Since ${\beta^{p^s}\in K \subset K(\zeta_{p^{r-1}})}$, we have 
\begin{equation}
\label{esigbb}
\sigma(\beta)/\beta \in\mu_{p^s}.  
\end{equation}
We write ${\sigma(\beta) =\beta\zeta_{p^s}^m}$, where ${m\in \Z}$. 

\item
Since ${\beta \in K(\zeta_{p^r})}$, and the restriction $\sigma\vert_{K(\zeta_{p^{r}})}$ is of order~$p$ in the group $\Gal\bigl(K(\zeta_{p^r})/K(\zeta_{p^{r-1}})\bigr)$, we have ${\sigma^p(\beta) =\beta}$. In terms of the integers~$m$ and~$g$ introduced above, this reads as 
\begin{equation*}
m(1+g+\cdots +g^{p-1}) \equiv 0 \pmod{p^s}. 
\end{equation*}
\end{enumerate}

Now we are ready to complete the proof. Using~\eqref{econgg}, a standard argument implies that 
$$
1+g+\cdots +g^{p-1}= \frac{g^p-1}{g-1}\equiv p\pmod{p^2}. 
$$
Hence ${m\equiv 0\pmod{p^{s-1}}}$, which means that~\eqref{esigbb} can be drastically refined:
${\sigma(\beta)/\beta \in\mu_{p}}$. 

On the other hand, since ${\zeta_{p^r}^p\in K(\zeta_{p^{r-1}})}$, we also have ${\sigma(\zeta_{p^r})/\zeta_{p^r}\in \mu_p}$. Moreover, since ${\zeta_{p^r}\notin K(\zeta_{p^{r-1}})}$, the $p^\tho$ root of unity $\sigma(\zeta_{p^r})/\zeta_{p^r}$ must be primitive. It follows that there exists ${b\in \Z}$ such that 
${\sigma(\beta)/\beta=(\sigma(\zeta_{p^r})/\zeta_{p^r})^b}$, which can be re-written as ${\sigma(\beta')=\beta'}$, where ${\beta':=\beta\zeta_{p^r}^{-b}}$. 

Thus, ${\beta' \in K(\zeta_{p^{r-1}})}$. Since ${(\beta')^{p^s}=\alpha}$, this implies~\eqref{ealin}. The theorem is proved.
\end{proof}

Hypothesis ``~${i\in K}$ if ${p=2}$~'' cannot be dropped. For instance, take ${K=\R}$. Then ${-1}$  not a square in~$K$, but for every~$m$ it is a $4m^\tho$ power in ${K(\zeta_{4m})=\C}$. However, Chevalley made the following important observation.

\begin{proposition}[Chevalley]
\label{pnoti}
Let~$K$ be a field with ${\charac K\ne 2}$ and~$k$ a positive integer such that 
${\zeta_{2^{k+1}}\notin K(i)}$.
Then for any positive integer~$r$ the following holds:  if ${\alpha \in K}$ is a ${2^{r+k}}$th power in $K(i)$, then it is a $2^r$th power in~$K$. In symbols: 
${K(i)^{2^{r+k}}\cap K \subset K^{2^r}}$.    
\end{proposition}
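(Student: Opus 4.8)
The plan is a direct Galois-descent argument over the quadratic extension $L := K(i)$; notably, it will not even require the full strength of Theorem~\ref{thchbis}. If $i \in K$ then $L = K$ and the assertion is trivial, since $2^r \mid 2^{r+k}$ forces $K^{2^{r+k}} \subseteq K^{2^r}$; so I may assume $i \notin K$. As $\charac K \ne 2$, the polynomial $X^2+1$ is separable, hence $L/K$ is a Galois extension of degree~$2$. Write $\Gal(L/K) = \{1,\tau\}$, where $\tau$ is ``complex conjugation'', $\tau(i) = -i$, and the fixed field of $\tau$ is $K$. The whole idea is that the surplus $2^k$ in the exponent exactly compensates for the $2$-power roots of unity available in~$L$.

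First I would write $\alpha = \beta^{2^{r+k}}$ with $\beta \in L$ (the case $\alpha = 0$ being trivial) and set $\omega := \tau(\beta)/\beta \in L^\times$. Applying $\tau$ to $\alpha = \beta^{2^{r+k}}$ and using $\tau(\alpha) = \alpha$ (since $\alpha \in K$) gives $\tau(\beta)^{2^{r+k}} = \beta^{2^{r+k}}$, that is, $\omega^{2^{r+k}} = 1$. Hence $\omega$ is a root of unity of $2$-power order lying in $L$, so $\omega \in \mu_{2^{r+k}} \cap L$.

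The key step is to bound this group using the hypothesis. The $2$-power roots of unity in any field are cyclic, forming a subgroup of the Prüfer group $\mu_{2^\infty}$, hence equal $\mu_{2^t}$ for a unique $t$ (possibly $+\infty$); since $i = \zeta_4 \in L$ we have $t \ge 2$, while $\zeta_{2^{k+1}} \notin L$ forces $t \le k$. Therefore $\omega \in \mu_{2^t}$ with $t \le k$, and in particular $\omega^{2^k} = 1$.

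Finally I would set $\delta := \beta^{2^k} \in L$ and observe that
\[
\tau(\delta) = \tau(\beta)^{2^k} = (\beta\omega)^{2^k} = \beta^{2^k}\,\omega^{2^k} = \beta^{2^k} = \delta,
\]
so that $\delta$ is fixed by $\tau$, i.e.\ $\delta \in K$. Since $\delta^{2^r} = \beta^{2^{r+k}} = \alpha$, this exhibits $\alpha$ as a $2^r$th power in $K$, which is exactly $K(i)^{2^{r+k}} \cap K \subseteq K^{2^r}$. I expect the only real subtlety to be the root-of-unity bookkeeping of the third step: one must see clearly that the single hypothesis $\zeta_{2^{k+1}} \notin K(i)$ is precisely what guarantees $\omega^{2^k} = 1$, so that absorbing the factor $2^k$ into $\delta$ annihilates $\omega$ entirely and lands $\delta$ in the fixed field~$K$.
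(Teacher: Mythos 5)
Your proof is correct and follows essentially the same route as the paper: both arguments apply the conjugation of $K(i)/K$ to $\beta$, observe that $\tau(\beta)/\beta$ is a root of unity of $2$-power order which the hypothesis $\zeta_{2^{k+1}}\notin K(i)$ bounds by $2^k$, and conclude that $\beta^{2^k}\in K$, whence $\alpha=(\beta^{2^k})^{2^r}\in K^{2^r}$. The only cosmetic difference is that the paper phrases the order bound via the minimal $m$ with $\beta^{2^m}\in K$ (showing $\beta/\bar\beta$ is a primitive $2^m$th root of unity), whereas you bound the whole group of $2$-power roots of unity in $K(i)$ directly.
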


\begin{proof}
Let ${\alpha \in K}$ be a ${2^{r+k}}$th power in $K(i)$. Write ${\alpha=\beta^{2^{r+k}}}$, where ${\beta \in K(i)}$. If ${\beta \in K}$ then there is nothing to prove, so we assume that ${\beta\notin K}$ and we denote by~$\bar\beta$ the conjugate of~$\beta$ over~$K$. Let~$m$ be the smallest integer with the property  ${\beta^{2^m}\in K}$. Then 
${\beta/\bar\beta}$ is a primitive $2^m$th root of unity. Since ${\beta/\bar\beta\in K(i)}$, and ${\zeta_{2^{k+1}}\notin K(i)}$, we must have   ${m\le k}$; in particular, ${\beta^{2^k}\in K}$. Hence ${\alpha =(\beta^{2^k})^{2^r}\in K^{2^r}}$, as wanted. 
\end{proof}

\section{The Key Lemma}
\label{skey}

The theorem of Chevalley treats the case ${\alpha \in K(\zeta_{p^r})^{p^r}}$. If we want to generalize this, we need  to treat ${\alpha \in K(\zeta_{n})^{p^r}}$ with arbitrary~$n$. This is accomplished with the help of the following key lemma, which is, probably, inspired by Chevalley's Proposition~\ref{pnoti}.  

\begin{lemma}[Smith]
\label{lsm}
Let~$K$ be a field and~$p$ a prime number distinct from $\charac K$. If ${p=2}$ then we assume that ${i\in K}$. Let~$L$ be a finite Galois extension of~$K$, and let~$\ell$ be a non-negative integer such that ${\zeta_{p^{\ell+1}}\notin L}$. Then for any positive integer~$r$  the following holds: if ${\alpha \in K}$ is a ${p^{r+\ell}}$th power in~$L$, then it is a $p^r$th power in~$K$. In symbols: 
${L^{p^{r+\ell}}\cap K \subset K^{p^r}}$.   
\end{lemma}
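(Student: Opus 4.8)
The plan is to prove the lemma by Galois descent, generalizing the argument used for Proposition~\ref{pnoti}. Set ${G=\Gal(L/K)}$. We may assume ${\alpha\ne 0}$, since otherwise ${\beta=0}$ and ${\alpha=0=0^{p^r}}$. Write ${\alpha=\beta^{p^{r+\ell}}}$ with ${\beta\in L^\times}$. The central object is the quantity $\sigma(\beta)/\beta$ for ${\sigma\in G}$: because ${\alpha\in K}$ is fixed by~$G$, applying~$\sigma$ to the relation ${\beta^{p^{r+\ell}}=\alpha}$ gives ${\sigma(\beta)^{p^{r+\ell}}=\alpha=\beta^{p^{r+\ell}}}$, so that $\sigma(\beta)/\beta$ is a $p^{r+\ell}\tho$ root of unity lying in~$L$.

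First I would pin down how large such a root of unity can be. Since ${\sigma(\beta)/\beta\in\mu_{p^{r+\ell}}}$, its order is some power~$p^a$ of~$p$; and because it lies in~$L$, the whole group $\mu_{p^a}$ is contained in~$L$, so in particular ${\zeta_{p^a}\in L}$. The hypothesis ${\zeta_{p^{\ell+1}}\notin L}$ then forces ${a\le\ell}$, for otherwise ${\zeta_{p^{\ell+1}}=\zeta_{p^a}^{p^{a-\ell-1}}}$ would lie in~$L$. Hence ${\sigma(\beta)/\beta\in\mu_{p^\ell}}$, i.e.\ ${(\sigma(\beta)/\beta)^{p^\ell}=1}$, for every ${\sigma\in G}$. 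This is the heart of the matter: the non-divisibility hypothesis on roots of unity bounds the ``defect'' by which $\beta$ fails to be $G$-invariant.

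With this in hand the conclusion is immediate. Setting ${\gamma:=\beta^{p^\ell}}$, the previous paragraph gives ${\sigma(\gamma)=\sigma(\beta)^{p^\ell}=(\sigma(\beta)/\beta)^{p^\ell}\,\beta^{p^\ell}=\gamma}$ for all ${\sigma\in G}$. Since ${L/K}$ is Galois, ${\gamma\in L^G=K}$. Finally ${\gamma^{p^r}=\beta^{p^{r+\ell}}=\alpha}$, whence ${\alpha=\gamma^{p^r}\in K^{p^r}}$, as required.

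The argument is short, and I do not expect a serious obstacle; the one point deserving care is the structure of the $p$-power roots of unity in~$L$. Concretely, one must justify that every element of ${\mu_{p^\infty}\cap L}$ has order at most~$p^\ell$, which rests on ${\charac K\ne p}$ (so that the $\mu_{p^a}$ are cyclic of the expected order, with $\zeta_{p^a}$ a genuine generator) together with the hypothesis ${\zeta_{p^{\ell+1}}\notin L}$. It is worth remarking that this route does not appear to use the extra assumption ${i\in K}$ when ${p=2}$; that hypothesis is in any case harmless and keeps the statement uniform with its applications. If one prefers to track it explicitly, the same conclusion follows from the ``smallest~$m$ with ${\beta^{p^m}\in K}$'' variant employed in the proof of Proposition~\ref{pnoti}.
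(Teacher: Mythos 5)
Your proof is correct, but it takes a genuinely different --- and considerably more direct --- route than the paper. The paper proves Lemma~\ref{lsm} by induction on $[L:K]$: it introduces the minimal~$m$ with ${\beta^{p^m}\in K}$, and in the case ${m>\ell}$ it invokes the binomial irreducibility criterion (Corollary~\ref{cps}) to produce ${\zeta_p\in L\setminus K}$, replaces~$K$ by $K(\zeta_p)$, and finally descends back to~$K$ via Theorem~\ref{thchbis}; the hypothesis ``${i\in K}$ if ${p=2}$'' enters precisely through those two ingredients. You instead apply to every ${\sigma\in\Gal(L/K)}$ the observation underlying Proposition~\ref{pnoti}: $\sigma(\beta)/\beta$ is a $p$-power root of unity lying in~$L$, hence of order at most~$p^\ell$ because ${\zeta_{p^{\ell+1}}\notin L}$, so $\beta^{p^\ell}$ is fixed by all of $\Gal(L/K)$ and therefore lies in~$K$, giving ${\alpha=(\beta^{p^\ell})^{p^r}\in K^{p^r}}$. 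Each step checks out; the only facts used are ${\charac K\ne p}$ (so that a primitive $p^a$th root of unity generates all of $\mu_{p^a}$) and ${L^{\Gal(L/K)}=K}$ for a finite Galois extension. Your argument buys several things: it is self-contained (no appeal to Theorem~\ref{thchbis} or Corollary~\ref{cps}), it shows that the hypothesis ${i\in K}$ for ${p=2}$ is not actually needed in this lemma (it remains essential elsewhere, e.g.\ in Theorem~\ref{thchbis} itself), and it even shows that the paper's case ${m>\ell}$ is vacuous, since ${\beta^{p^\ell}\in K}$ always. One caveat worth keeping in mind, which you implicitly acknowledge: this easy argument works only because~$L$ is assumed to contain few $p$-power roots of unity; in the genuinely hard situation ${L=K(\zeta_{p^r})}$ the bound on the order of $\sigma(\beta)/\beta$ is useless, which is why Theorem~\ref{thch} still requires its separate and more delicate proof.
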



\begin{proof} 
Let ${\alpha \in K}$ be a ${p^{r+\ell}}$th power in~$L$. Write ${\alpha=\beta^{p^{r+\ell}}}$, where ${\beta \in L}$. If ${\beta \in K}$ then there is nothing to prove, so we assume that ${\beta\notin K}$. 
Let~$m$ be the smallest integer with the property  ${\beta^{p^m}\in K}$. If ${m\le \ell}$ then ${\beta^{2^\ell}\in K}$, and  ${\alpha =(\beta^{2^\ell})^{2^r}\in K^{2^r}}$, as wanted.

Now assume that ${m>\ell}$. Then ${\gamma:=\beta^{p^m}}$  is a $p^\tho$ power in~$K$. Indeed, in the opposite case polynomial ${X^{p^m}-\gamma}$ would be irreducible over~$K$ by Corollary~\ref{cps}. Since it has a root~$\beta$ in~$L$, which is a Galois extension of~$K$, all its roots must belong to~$L$. In particular, ${\beta\zeta_{p^m}\in L}$, which implies that ${\zeta_{p^m}\in L}$. Since ${m>\ell}$, this contradicts our hypothesis ${\zeta_{p^{\ell+1}}\notin L}$. 

Thus, ${\gamma=\eta^p}$ for some ${\eta\in K}$. Note that ${\eta\ne\beta^{p^{m-1}}}$ by the definition of~$m$. Hence ${\eta/\beta^{p^{m-1}}}$ is a primitive $p^\tho$ root of unity; in particular, ${\zeta_p\in L}$. 

If ${\zeta_p\in K}$ then ${\beta^{p^{m-1}}\in K}$, contradicting the definition of~$m$. Hence~$\zeta_p$ belongs to~$L$, but not to~$K$; in particular, ${[L:K(\zeta_p)]<[L:K]}$. Using induction in the degree ${[L:K]}$, we conclude that ${\alpha\in K(\zeta_p)^{p^r}}$. Theorem~\ref{thchbis} now implies that  ${\alpha\in K^{p^r}}$. 
\end{proof}

\section{Proof of Theorem~\ref{thbass}}
\label{sbass}

Starting from this section, we assume that the field~$K$ is as in Theorem~\ref{thbass}; that is, it is of characteristic~$0$, and its maximal abelian subfield $K_\ab$ is finite over~$\Q$.  Let us introduce some notation. 

Let ${p\ge 3}$ be an odd prime number and~$\gerp$ a prime of $K_\ab$  above~$p$. Since $K_\ab$ is a Galois extension of~$\Q$,  the ramification index $e_{\gerp/p}$ depends only on~$p$, and not on the particular choice of~$\gerp$. We denote it by~$\epsilon_p$: 
$$
\epsilon_p:=e_{\gerp/p}, \quad   \text{ where $\gerp$ is a prime of~$K_\ab$,\quad $\gerp\mid p$}.
$$
We say that~$p$ is \textit{distinguished} if ${p-1\mid \epsilon_p}$. 

Note that a distinguished prime must ramify in~$K_\ab$. In particular, there exist at most finitely many distinguished primes. 

The ``oddest'' prime~$2$, requires, as often, special consideration. Recall that ${1+i}$ is the $\Q(i)$-prime  above~$2$. We define~$\epsilon_2$ as the  ramification index over ${1+i}$ of a $K_\ab(i)$-prime above ${1+i}$:
$$
\epsilon_2:=e_{\gerp/(1+i)}, \quad \text{where $\gerp$ is a prime of~$K_\ab(i)$,\quad $\gerp\mid 1+i$}.
$$ 
Again,~$\epsilon_2$ is well-defined because ${e_{\gerp/(1+i)}}$ is independent on the particular choice of~$\gerp$.  


\begin{proposition}
\label{plam}
Let~$\ell$ be a positive integer. 
\begin{enumerate}
\item
Let~$p$ be an odd prime number.  
If ${\zeta_{p}\in K(\zeta_m)}$ for some~$m$ not divisible by~$p$, then~$p$ is distinguished. Moreover, if  ${\zeta_{p^\ell}\in K(\zeta_m)}$, then  ${\ell\le \nu_p(\epsilon_p)+1}$. 

\item
Assume that ${\zeta_{2^\ell}\in K(i,\zeta_m)}$ for some odd integer~$m$. 
Then ${\ell\le \nu_2(\epsilon_2)+3}$. 
\end{enumerate}

\end{proposition}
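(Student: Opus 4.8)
The plan is to exploit that the cyclotomic field $\Q(\zeta_{p^\ell})$ is totally ramified above $p$ with ramification index $\phi(p^\ell)=p^{\ell-1}(p-1)$, whereas adjoining prime-to-$p$ roots of unity introduces no ramification above $p$. The one external ingredient I would invoke is the standard local fact that if $F/\Q$ is a finite extension unramified above $p$, then for any number field $K$ the compositum $KF/K$ is unramified at every prime of $K$ above $p$ (pass to completions: composing $K_\gerp$ with an unramified extension of $\Q_p$ stays unramified over $K_\gerp$). Together with multiplicativity of ramification indices in towers, this forces the ramification carried by $\zeta_{p^\ell}$ to be already present in $K$ (respectively in $K(i)$).

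For part~(1), set $L=K(\zeta_m)$ with $p\nmid m$. First I would note $L=K\cdot\Q(\zeta_m)$ and that $\Q(\zeta_m)/\Q$ is unramified above $p$, so $L/K$ is unramified at every prime above $p$. Next, from $\zeta_{p^\ell}\in L$ I get $\Q(\zeta_{p^\ell})\subseteq L$. Now fix any prime $\gerp$ of $K$ above $p$ and a prime $\gerP$ of $L$ above $\gerp$. On one hand $e_{\gerP/p}=e_{\gerP/\gerp}\,e_{\gerp/p}=e_{\gerp/p}$ because $L/K$ is unramified above $p$. On the other hand, running the tower $\Q\subseteq\Q(\zeta_{p^\ell})\subseteq L$ and using that $\Q(\zeta_{p^\ell})$ is totally ramified above $p$ with index $p^{\ell-1}(p-1)$, multiplicativity gives $p^{\ell-1}(p-1)\mid e_{\gerP/p}$. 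Combining, $p^{\ell-1}(p-1)\mid e_{\gerp/p}$ for every $\gerp\mid p$, hence $p^{\ell-1}(p-1)\mid\epsilon_p$. Taking $\ell=1$ yields $p-1\mid\epsilon_p$, i.e.\ $p$ is distinguished; in general, applying $\nu_p$ and noting $\nu_p(p-1)=0$ gives $\ell-1\le\nu_p(\epsilon_p)$, which is the claimed bound.

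For part~(2) I would run the same argument one level up, over $K(i)$ and relative to the prime $1+i$. Put $L'=K(i,\zeta_m)=K(i)(\zeta_m)$ with $m$ odd; the case $\ell=1$ being trivial, assume $\ell\ge 2$, so that $i=\zeta_4\in\Q(\zeta_{2^\ell})$, i.e.\ $\Q(i)\subseteq\Q(\zeta_{2^\ell})\subseteq L'$. Since $m$ is odd, $L'/K(i)$ is unramified above $2$, so for a prime $\gerp$ of $K(i)$ above $1+i$ and a prime $\gerP$ of $L'$ above it one has $e_{\gerP/(1+i)}=e_{\gerp/(1+i)}$. Because $e_{(1+i)/2}=2$ and $\Q(\zeta_{2^\ell})$ is totally ramified above $2$ with index $2^{\ell-1}$, the unique prime of $\Q(\zeta_{2^\ell})$ above $2$ has ramification index $2^{\ell-2}$ over $1+i$; multiplicativity through $\Q(\zeta_{2^\ell})$ then gives $2^{\ell-2}\mid e_{\gerP/(1+i)}=e_{\gerp/(1+i)}$. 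Hence $2^{\ell-2}\mid\epsilon_2$ and $\ell\le\nu_2(\epsilon_2)+2$, which is even slightly stronger than the asserted $\ell\le\nu_2(\epsilon_2)+3$.

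I do not anticipate a serious obstacle; the only points requiring care are the unramified base-change fact and the bookkeeping of ramification indices relative to $1+i$ rather than to $2$, where the factor $e_{(1+i)/2}=2$ accounts for the shift between the $+1$ of part~(1) and the constant in part~(2). One should also check that the gcd defining $\epsilon_p$ (respectively $\epsilon_2$) ranges over a set each of whose members is divisible by the relevant prime power, which is exactly what the per-prime divisibility established above provides.
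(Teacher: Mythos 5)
Your argument is correct and is essentially the paper's own proof: $K(\zeta_m)/K$ (resp.\ $K(i,\zeta_m)/K(i)$) is unramified above $p$ (resp.\ above $1+i$) because $p\nmid m$, while $p$ is totally ramified in $\Q(\zeta_{p^\ell})$, and multiplicativity of ramification indices in towers forces the divisibility of every $e_{\gerp/p}$, hence of the gcd $\epsilon_p$. In part~(2) you correctly compute the ramification index of the prime of $\Q(\zeta_{2^\ell})$ over $1+i$ as $2^{\ell-2}$ and thereby get the slightly sharper bound $\ell\le\nu_2(\epsilon_2)+2$, whereas the paper quotes $2^{\ell-3}$ (which is not the exact index, but still a valid divisor) and settles for $\ell\le\nu_2(\epsilon_2)+3$.
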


\begin{proof}
To start with, we claim that $K_\ab(\zeta_m)$ is the maximal abelian subfield of ${K(\zeta_m)}$:
\begin{equation}
\label{elabkabz}
K(\zeta_m)_\ab=K_\ab(\zeta_m).
\end{equation}
Indeed, we have 
$$
[K(\zeta_m):K]= [K_\ab(\zeta_m):K_\ab(\zeta_m)\cap K], 
$$
see, for instance, Theorem~1.12 from \cite[Chapter~VI]{La02}. 
By the definition of~$K_\ab$, we have ${K_\ab(\zeta_m)\cap K=K_\ab}$, which gives the equality
$$
[K_\ab(\zeta_m):K_\ab]=[K(\zeta_m):K]. 
$$
Similarly, setting ${L:=K(\zeta_m)}$, we have ${L_\ab\cap K=K_\ab}$, which implies that 
${[L_\ab:K_\ab]=[KL_\ab:K]}$. Since ${KL_\ab=K(\zeta_m)}$, this proves that 
$$
[K_\ab(\zeta_m):K_\ab]=[L_\ab:K_\ab], 
$$
which yields~\eqref{elabkabz} because ${K_\ab(\zeta_m)\subseteq L_\ab}$.

Assume that~$p$ is an odd prime. 
Let~$\gerp$ be a prime of~$K_\ab$ above~$p$ and~$\gerP$ a prime of $K_\ab(\zeta_m)$ above~$\gerp$. Since ${p\nmid m}$ by the hypothesis,~$\gerp$ does not ramify in $K_\ab(\zeta_m)$; in particular, ${e_{\gerp/p}=e_{\gerP/p}}$. 

On the other hand,~$p$ is totally ramified in $\Q(\zeta_{p^\ell})$, the ramification index being ${p^{\ell-1}(p-1)}$. If ${\zeta_{p^\ell}\in K(\zeta_m)}$, then ${\zeta_{p^\ell}\in K_\ab(\zeta_m)}$ by~\eqref{elabkabz}, which implies that
${p^{\ell-1}(p-1)\mid e_{\gerP/p}}$. 
Since ${e_{\gerP/p}=e_{\gerp/p}=\epsilon_p}$, this proves the proposition for the odd~$p$.

The case ${p=2}$  is similar.  If ${\ell\le 3}$ then there is nothing to prove, so let us assume that ${\ell\ge 4}$. Let~$\gerp$ be a prime of~$K_\ab(i)$ above ${1+i}$ and~$\gerP$ a prime of $K_\ab(i,\zeta_m)$ above~$\gerp$. Since~$m$ is odd, we have ${e_{\gerp/(1+i)}=e_{\gerP/(1+i)}}$. 

The prime ${1+i}$ is totally ramified in $\Q(\zeta_{2^\ell})$, the ramification index being ${2^{\ell-3}}$. If ${\zeta_{2^\ell}\in K(i, \zeta_m)}$, then ${\zeta_{2^\ell}\in K_\ab(i, \zeta_m)}$, because 
${K(i, \zeta_m)_\ab=K_\ab(i,\zeta_m)}$, 
which is proved in the same fashion as~\eqref{elabkabz}. 
This implies that
$$
2^{\ell-3}\mid e_{\gerP/(1+i)}=e_{\gerp/(1+i)}=\epsilon_2, 
$$
which  
proves the proposition for ${p=2}$ as well.
\end{proof}

This proposition motivates the following definition. For a prime number~$p$, set
$$
\lambda_p:=
\begin{cases}
\nu_p(\epsilon_p)+1, & \text{if $p$ is distinguished}; \\
\nu_2(\epsilon_2)+3, & \text{if ${p=2}$}; \\
0, & \text{for the other~$p$}. 
\end{cases}
$$

The following statement is the technical heart of the proof of Theorem~\ref{thbass}.

\begin{proposition}
\label{pheart}
Let~$n$ be a positive integer,~$p$ a prime divisor of~$n$, and ${\alpha \in K}$. We set ${r:=\nu_p(n)}$, so that  ${p^r\|n}$. We write ${\lambda=\lambda_p}$, to simplify notation. 

Assume that ${p\ge 3}$, or that ${p=2}$ and ${i\in K}$.  Let~$\alpha$ be a ${p^{\lambda+r}}$th power in $K(\zeta_{np^{\lambda}})$. Then~$\alpha$ is a $p^r$th power in~$K$. In symbols: ${K(\zeta_{np^{\lambda}})^{p^{\lambda+r}}\cap K\subset K^{p^r}}$. 

When ${p=2}$ and ${i\notin K}$,   the same statement holds, but with~$\lambda$ replaced by $2\lambda$;  that is, ${K(\zeta_{n\cdot 4^\lambda})^{2^{2\lambda+r}}\cap K\subset K^{2^r}}$. 
\end{proposition}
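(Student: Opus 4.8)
The plan is to separate the cyclotomic extension $K(\zeta_{np^\lambda})$ into its prime-to-$p$ part and its $p$-power part, descending through the latter by Chevalley's Theorem~\ref{thchbis} and through the former by Smith's Lemma~\ref{lsm}. The reason a direct application of Lemma~\ref{lsm} to ${L=K(\zeta_{np^\lambda})}$ fails is that this field contains $\zeta_{p^{r+\lambda}}$, a high $p$-power root of unity, so no small admissible $\ell$ is available; the $p$-power cyclotomic part must be removed first. I will treat the main case (${p\ge3}$, or ${p=2}$ with ${i\in K}$) and then reduce the case ${p=2}$, ${i\notin K}$ to it by passing through $K(i)$.

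For the main case, write ${n=p^rn'}$ with ${p\nmid n'}$ and set ${M=K(\zeta_{n'})}$, so that ${K(\zeta_{np^\lambda})=M(\zeta_{p^{r+\lambda}})}$. Given that $\alpha$ is a $p^{\lambda+r}$th power in $M(\zeta_{p^{r+\lambda}})$, I would first apply Theorem~\ref{thchbis} over the base field $M$ (legitimate since ${\charac M=0}$ and, when ${p=2}$, ${i\in K\subseteq M}$), with exponent ${s=r+\lambda}$ and root of unity $\zeta_{p^{r+\lambda}}$; the hypotheses hold with equality ${s=r+\lambda\ge1}$. This strips off the $p$-power cyclotomic part and yields ${\alpha\in M^{p^{r+\lambda}}}$. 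Then I would apply Smith's Lemma~\ref{lsm} with ${L=M}$, a finite Galois extension of $K$, and ${\ell=\lambda}$: since $\alpha$ is a ${p^{r+\lambda}=p^{r+\ell}}$th power in $M$, it suffices to know that ${\zeta_{p^{\lambda+1}}\notin M}$ in order to conclude ${\alpha\in K^{p^r}}$. This non-containment is exactly where the definition of $\lambda$ pays off: by Proposition~\ref{plam} (part~(1) for odd $p$, part~(2) for ${p=2}$, taking ${m=n'}$, which is prime to $p$), the presence of $\zeta_{p^{\lambda+1}}$ in ${K(\zeta_{n'})}$ would force ${\lambda+1\le\lambda}$, a contradiction; when $p$ is not distinguished one instead uses that ${\zeta_p\in K(\zeta_{n'})}$ would make $p$ distinguished, whereas ${\lambda=0}$ there.

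For the case ${p=2}$, ${i\notin K}$, the plan is to apply the main case over the base field ${K(i)}$. The point is that ${\epsilon_2}$, and hence ${\lambda_2}$, is defined through the primes of $K(i)$, so it is unchanged when the base is enlarged from $K$ to $K(i)$. Applying the main case over $K(i)$ with the integer ${N=n\cdot2^\lambda}$, so that ${\nu_2(N)=r+\lambda}$ and ${N\cdot2^\lambda=n\cdot4^\lambda}$, converts the hypothesis that $\alpha$ is a $2^{2\lambda+r}$th power in ${K(\zeta_{n\cdot4^\lambda})=K(i)(\zeta_{N\cdot2^\lambda})}$ into the conclusion ${\alpha\in K(i)^{2^{r+\lambda}}}$. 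Finally I would descend from $K(i)$ to $K$ by Chevalley's Proposition~\ref{pnoti} with ${k=\lambda}$; its hypothesis ${\zeta_{2^{\lambda+1}}\notin K(i)}$ again follows from Proposition~\ref{plam}(2) (with ${m=1}$), giving ${\alpha\in K^{2^r}}$.

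The main obstacle, and the only genuinely delicate point, is the bookkeeping of exponents so that the two descent steps fit together exactly: one must check that the exponent surviving Theorem~\ref{thchbis} is precisely ${p^{r+\lambda}=p^{r+\ell}}$ with ${\ell=\lambda}$ matching the index at which $\zeta_{p^{\ell+1}}$ first leaves the field, and in the ${p=2}$, ${i\notin K}$ case that the doubling of $\lambda$ is exactly accounted for by the extra factor $2^\lambda$ needed to feed the $K(i)$-level argument before descending via Proposition~\ref{pnoti}. Verifying ${\zeta_{p^{\lambda+1}}\notin M}$ (respectively ${\notin K(i)}$) from Proposition~\ref{plam} is the crux that makes both descents legitimate, and it is there that the specific value of $\lambda_p$ is used in an essential way.
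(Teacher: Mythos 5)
Your proposal is correct and follows essentially the same route as the paper: split off the prime-to-$p$ cyclotomic part, descend through the $p$-power part via Theorem~\ref{thch}/\ref{thchbis}, then descend to $K$ via Lemma~\ref{lsm} with the hypothesis $\zeta_{p^{\lambda+1}}\notin K(\zeta_{n'})$ supplied by Proposition~\ref{plam}, and handle $p=2$, $i\notin K$ by running the argument over $K(i)$ (noting $\epsilon_2$ is unchanged) and finishing with Proposition~\ref{pnoti}. The only cosmetic difference is that the paper takes $\ell$ to be the largest integer with $\zeta_{p^\ell}\in K(\zeta_m)$ and uses $\ell\le\lambda$, while you feed $\ell=\lambda$ directly into Lemma~\ref{lsm}; both are legitimate.
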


\begin{proof}
Write ${n=p^rm}$, so that ${p\nmid m}$. Assume first that ${p\ge 3}$, or that ${p=2}$ and ${i\in K}$. We have ${K(\zeta_{np^\lambda})=K(\zeta_m, \zeta_{p^{\lambda+r}})}$. If ${\alpha \in K(\zeta_{np^\lambda})^{p^{\lambda+r}}}$, then, 
applying Theorem~\ref{thch} with ${\lambda+r}$ as~$r$ and  $K(\zeta_m)$ as~$K$, we obtain ${\alpha \in K(\zeta_m)^{p^{\lambda+r}}}$.

Let~$\ell$ be the biggest integer with the property ${\zeta_{p^\ell}\in K(\zeta_m)}$.
Proposition~\ref{plam} implies that ${\ell\le \lambda}$; in particular,  ${\alpha \in K(\zeta_m)^{p^{\ell+r}}}$. Finally, 
applying Lemma~\ref{lsm} with ${L=K(\zeta_m)}$, we obtain   ${\alpha \in K^{p^{r}}}$.

The case ${p=2}$ and ${i\notin K}$ reduces to the case ${i\in K}$ with the help of Proposition~\ref{pnoti}. Let~$k$ and~$\ell$ be the biggest integers with the properties  
${\zeta_{2^k}\in K(i)}$ and  ${\zeta_{2^\ell}\in K(i,\zeta_m)}$,
respectively. Then ${k\le \ell \le \lambda}$. Arguing as before, we prove the following: if ${\alpha \in K(i,\zeta_{n\cdot 4^\lambda})^{2^{2\lambda+r}}}$,  then ${\alpha \in K(i)^{2^{k+r}}}$. Proposition~\ref{pnoti} implies now that ${\alpha \in K^{2^{r}}}$.
\end{proof}

Now we are ready to complete the proof of Theorem~\ref{thbass}. 
Set
\begin{equation}
\label{elambda}
\Lambda := 4^{\lambda_2}\prod_{p\ge 3}p^{\lambda_p}. 
\end{equation}
The product is well-defined, because only distinguished primes contribute to it: we have ${\lambda_p=0}$ for the other odd~$p$. 

Let~$n$ be a positive integer, and let ${\alpha\in K}$ satisfy ${\alpha \in K(\zeta_{n\Lambda})^{n\Lambda}}$. For ${p\mid n}$ we define 
$$
r_p:= \nu_p(n), \qquad N_p:= 
\begin{cases}
n\Lambda/p^{\lambda_p}, &\text{if $p\ge 3$}, \\
n\Lambda/4^{\lambda_2}, &\text{if $p=2$}.
\end{cases}
$$
Then ${p^{r_p}\|N_p}$ and
$$
\alpha \in 
\begin{cases}
K(\zeta_{N_p\cdot p^{\lambda_p}})^{p^{\lambda_p+r_p}},  &\text{if $p\ge 3$}, \\
K(\zeta_{N_2\cdot 4^{\lambda_2}})^{2^{2\lambda_2+r_2}}, &\text{if $p=2$}. 
\end{cases}
$$
Applying Proposition~\ref{pheart} with~$N_p$ as~$n$, we obtain ${\alpha \in K^{p^{r_p}}}$. Theorem~\ref{thbass} now follows, because  
${K^n=\bigcap_{p\mid n}K^{p^{r_p}}}$.

\begin{remark}
\label{relam}
It follows from Proposition~\ref{pheart} that, when ${i\in K}$, one may replace $4^{\lambda_2}$ by $2^{\lambda_2}$  in~\eqref{elambda}. 
\end{remark}

\section{The Chevalley-Bass Number of a Field}
\label{schba}
Let~$K$ be a field as in Theorem~\ref{thbass}; that is, of characteristic~$0$ and with finite degree ${[K_\ab:\Q]}$. Theorem~\ref{thbass} implies that there exist a positive integer~$\Lambda$ such that 
\begin{equation}
\label{echba}
K(\zeta_{\Lambda n})^{\Lambda n}\cap K\subset K^n \qquad (n=1,2,3,\ldots). 
\end{equation} 
Call any such~$\Lambda$ suitable for~$K$. The smallest suitable~$\Lambda$ will be called the \textit{Chevalley-Bass Number} of~$K$ and denoted $\Lambda_K$.

\begin{proposition}
Let~$K$ be as above. 
\begin{enumerate}
\item
\label{igcd}
If~$\Lambda_1$ and~$\Lambda_2$ are suitable for~$K$, then so is $\gcd(\Lambda_1,\Lambda_2)$.

\item
\label{imultip}
If~$\Lambda$ is suitable for~$K$, then so is every  positive integer divisible by~$\Lambda$.  

\item
\label{iall}
A positive integer~$\Lambda$ is suitable for~$K$ if and only if it is divisible by the Chevalley-Bass number~$\Lambda_K$.   
\end{enumerate}
\end{proposition}

\begin{proof}
The proof is very easy. 
For item~\ref{igcd}, let us denote ${\Lambda:=\gcd(\Lambda_1,\Lambda_2)}$ and show that~\eqref{echba} holds.  Since~$\Lambda_1$ is suitable, we  have
$$
K(\zeta_{\Lambda n})^{\Lambda_1 n}\cap K\subset K(\zeta_{\Lambda_1 n})^{\Lambda_1 n}\cap K \subset K^n.  
$$
Similarly, ${K(\zeta_{\Lambda n})^{\Lambda_2 n}\cap K \subset K^n}$. Since  
${
K(\zeta_{\Lambda n})^{\Lambda_1 n}\cap K(\zeta_{\Lambda n})^{\Lambda_2 n}=K(\zeta_{\Lambda n})^{\Lambda n},} 
$
this proves~\eqref{echba}. 

To prove item~\ref{imultip}, assume that~$\Lambda$ is suitable. Then so is its multiple $k\Lambda$, just by applying~\eqref{echba} with~$n$ replaced by $kn$. 

Finally, item~\ref{iall} follows from the two previous items; indeed, item~\ref{igcd} implies that any suitable number is divisible by~$\Lambda_K$, and item~\ref{imultip} implies that every multiple of~$\Lambda_K$ is suitable. 
\end{proof}

\subsection{Estimating the Chevalley-Bass Number}
\label{ssestimchb}
It does not look easy to determine the exact value of the Chevalley-Bass number of a given field~$K$, but it is easy to estimate it. Below we give two such estimates,  one in terms of the degree of~$K_\ab$ and the other in terms of the discriminant. 


\begin{proposition}
\label{prdeg}
Let~$K$ be as above. Denote by~$d$ the degree ${[K_\ab:\Q]}$. 
\begin{enumerate}
\item
\label{igen}
Set 
\begin{align*}
\Delta&:=\prod_{p-1\mid d}p, & \Delta_0 &:= 2^{\nu_2(d)+5}\Delta \prod_{p-1\mid d}p^{\nu_p(d)},\\
\Delta_1&:=2^{\nu_2(d)+5}d\Delta, & \Delta_2&:= 32d^2\Delta. 
\end{align*}
Then the Chevalley-Bass number~$\Lambda_K$ divides  each of the numbers $\Delta_0$, $\Delta_1$ and $\Delta_2$. 

\item
\label{iiin}
If ${i\in K}$ then ${ \Lambda_K\mid 4d\Delta}$. 

\item
\label{iodd}
If~$d$ is odd then $\Lambda_K$ is one of the five numbers ${4,8,16,32,64}$.

\item
\label{iest}
If ${d\ge 3}$ then 
\begin{equation}
\label{elale}
\Lambda_K\le \exp\exp\left(1.7\frac{\log d}{\log\log d}\right). 
\end{equation}
In particular,  ${\Lambda_K\le\exp(d^{o(1)})}$ as ${d\to\infty}$.  
\end{enumerate}
\end{proposition}

The proof requires some preliminary facts collected in the following lemma. 

\begin{lemma}
Let~$n$ be a positive integer. Then the product of all divisors of~$n$ is equal to $n^{\tau(n)/2}$, where $\tau(n)$ is the number of divisors of~$n$:
\begin{equation}
\label{eprodiv}
\prod_{m\mid n}m=n^{\tau(n)/2}. 
\end{equation}
We also have the estimates 
\begin{align}
\label{eutau}
\log\tau(n) &\le 1.07\frac{\log n}{\log \log n} &&(n\ge3), \\
\label{emph}
\prod_{p\le x} \frac{p}{p-1}&\le 2\log (x-1) 
&&(x\ge 25). 
\end{align}
\end{lemma}

\begin{proof}
We have 
$$
\left(\prod_{m\mid n} m\right)^2=\prod_{m\mid n} m\prod_{m\mid n}  \frac nm= n^{\tau(n)}, 
$$
which proves~\eqref{eprodiv}. For~\eqref{eutau} see \cite[Théorème~1]{NR83}. Finally,~\eqref{emph} follows easily from the estimate  
$$
\prod_{p\le x} \frac{p}{p-1}\le 1.8\left(\log x  +\frac{1}{\log x}\right), 
$$
which holds for all ${x>1}$, see \cite[Corollary~1 of Theorem~8]{RS62}.
\end{proof}

\begin{proof}[Proof of Proposition~\ref{prdeg}]
Since  ${\epsilon_p \mid d}$  for every~$p$, we have ${p-1\mid d}$ for a distinguished~$p$. 
Hence the right-hand side of~\eqref{elambda} divides 
$$
2^{2\nu_2(d)+6}\prod_{\genfrac{}{}{0pt}{}{p-1\mid d}{p\ge 3}}p^{1+\nu_p(d)}=  \Delta_0. 
$$
Clearly, ${\Delta_0\mid\Delta_1\mid \Delta_2}$. This proves item~\ref{igen}. 

If ${i\in K}$ then ${2\nu_2(d)+6}$ above can be replaced by ${\nu_2(d)+3}$, see Remark~\ref{relam}. Hence ${\nu_2(d)+5}$ in the definition of~$\Delta_1$ can be replaced by~$2$. This proves item~\ref{iiin}.

If~$d$ is odd then ${\Delta_0=64}$.  Also, ${i\notin K}$ when~$d$ is odd,  and the already mentioned example ${-4=(1+i)^4}$ implies that ${\Lambda_K\ne 1,2}$. This proves item~\ref{iodd}.

We are left with the estimate~\eqref{elale}. It is clear for odd~$d$, because the right-hand side of~\eqref{elale} exceeds ${\exp\exp(1.7e)> 64}$. 
Running a simple \textsf{PARI} script, we check that~$\Delta_1$ does not exceed the right-hand side of~\eqref{elale} for even ${d\le 10^7}$. (The total computational time was less than 5 minutes on an ordinary laptop.) Hence we have to prove that 
\begin{equation}
\label{ejjj}
\log\Delta_2 \le \exp\left(1.7\frac{\log d}{\log\log d}\right)
\end{equation}
for ${d\ge 10^7}$. 
Using~\eqref{eprodiv} and~\eqref{eutau}, we obtain
$$
\Delta\le \prod_{m\mid d}m\prod_{p\le d+1}\frac{p}{p-1}\le d^{\tau(d)/2}\cdot 2\log d. 
$$ 
It follows that
$$
\log\Delta_2 \le \frac12\tau(d)\log d+2\log d+\log\log d+\log 64\le 2\tau(d)\log d, 
$$
where for the last inequality we used ${\tau(d)\ge 2}$ and ${d\ge 10^7}$. Next, using~\eqref{eutau}, we obtain 
$$
\log\Delta_2 \le \exp\left(1.07\frac{\log d}{\log \log d}+\log(2\log d)\right). 
$$ 
When ${d\ge 10^7}$ we have 
$
{\log(2\log d) \le 0.6\log d/\log \log d} 
$.
This proves~\eqref{ejjj}. 
\end{proof}

Since the distinguished primes  ramify in~$K$, it is natural to expect a simple expression for a suitable~$\Lambda$ in terms of the absolute discriminant ${\DD:=|\DD_{K_\ab}|}$. 

\begin{proposition}
The Chevalley-Bass number $\Lambda_K$ divides ${64\DD}$. If ${i\in K}$ then  ${\Lambda_K\mid 2\DD}$. 
\end{proposition}

\begin{proof}
By Remark~\ref{relam}, we need to show that 
\begin{equation}
\label{ewhattoprove}
\lambda_p \le 
\begin{cases}
\nu_p(\DD), & \text{if $p$ is distinguished}, \\
\nu_2(\DD)+1, &\text{if $p=2$ and $i\in K$},\\
 \nu_2(\DD)/2+3, &\text{if $p=2$ and ${i\notin K}$}. 
\end{cases}
\end{equation}
We will be using the following well-known fact: if~$L$ is number field and~$p$ a prime number, then 
\begin{equation*}
\nu_p(\DD_L)\ge \sum_{\gerp\mid p}(e_{\gerp/p}-1+\delta_\gerp)f_{\gerp/p}, \qquad \text{where}\quad
\delta_\gerp :=
\begin{cases}
0, &\text{if ${p\nmid e_{\gerp/p}}$}, \\
1, &\text{if ${p\mid e_{\gerp/p}}$}, 
\end{cases}
\end{equation*}
the sum being over the $L$-primes above~$p$. 
When ${L=K_\ab}$, we obviously have ${e_{\gerp/p}=\epsilon_p}$ for ${p\ge 3}$. For ${p=2}$,  denote by~$\gerP$ the prime of $K(i)$ above the $K$-prime~$\gerp$. Note that that ${\epsilon_2=e_{\gerP/(1+i)}}$. Then  
$$
e_{\gerP/2}=e_{(1+i)/2}e_{\gerP/(1+i)}=e_{\gerp/2}e_{\gerP/\gerp}. 
$$
Since  
$$
e_{(1+i)/2}=2, \qquad e_{\gerP/(1+i)}=\epsilon_2, \qquad e_{\gerP/\gerp}\in \{1,2\}, 
$$
this shows that ${e_{\gerp/2}\in \{\epsilon_2,2\epsilon_2\}}$, and ${e_{\gerp/2}=2\epsilon_2 }$ when ${i\in K}$. 

Thus, in any case we have  
\begin{equation*}
\nu_p(\DD)\ge \epsilon_p-1+\delta_p, \qquad \text{where}\quad 
\delta_p :=
\begin{cases}
0, &\text{if ${p\nmid \epsilon_p}$}, \\
1, &\text{if ${p\mid \epsilon_p}$}, 
\end{cases}
\end{equation*}
and in the special case ${p=2}$, ${i\in K}$ we have 
${\nu_2(\DD)\ge 2\epsilon_2}$. Hence, to establish~\eqref{ewhattoprove}, we need to show that
\begin{equation}
\label{ewhattoshow}
\nu_p(\epsilon_p) 
\le 
\begin{cases}
\epsilon_p-2+\delta_p, & \text{if $p$ is distinguished}, \\
2\epsilon_2-2, &\text{if $p=2$ and $i\in K$},\\
(\epsilon_2-1+\delta_2)/2, &\text{if $p=2$ and ${i\notin K}$}. 
\end{cases}
\end{equation}
When ${p\nmid \epsilon_p}$,  the inequalities in~\eqref{ewhattoshow} are true (note that ${\epsilon_p\ge p-1\ge 2}$ when~$p$ is distinguished). Hence we may assume that ${p\mid \epsilon_p}$; in particular, ${\delta_p=1}$ and ${\epsilon_p\ge p(p-1)}$. 
Since ${\nu_p(\epsilon_p)\le \log\epsilon_p/\log p}$ and ${\epsilon_p-1\ge \epsilon_p/2}$, condition~\eqref{ewhattoshow} follows from  
${\log\epsilon_p/\log p\le\epsilon_p/2}$.   
This  holds  when 
${\epsilon_p=p=2}$.  In the remaining cases we have ${\epsilon_p\ge 4}$; to treat these remaining cases, just note that 
the function ${x\mapsto \log x/\log 2-x/2}$
is decreasing for ${x\ge 4}$, and vanishes at ${x=4}$. 
\end{proof}

\subsection{The Chevalley-Bass Number of the Splitting Field}

For some applications, it is of interest to estimate the Chevalley-Bass number of the splitting field of a rational polynomial. 
\begin{proposition}
\label{prspl}
Let ${f(T)\in \Q[T]}$ be a polynomial of degree ${m\ge 2}$, and let~$K$ be the splitting field of~$f$. Then 
$
{\Lambda_K\le \exp\exp\left({m}/{\log m} \right)} 
$. 
\end{proposition}

\begin{proof}
The Galois group ${G:=\Gal(K/\Q)}$ is a subgroup of the symmetric group $\calS_m$, and ${d:=[K_\ab:\Q]}$ is the order of the maximal abelian quotient of~$G$; that is, 
${d=\#G/[G,G]}$. 
It is known that ${d\le 3^{m/3}}$, see Kovács~\& Praeger \cite[Corollary on page 284]{KP89},  or Aschbacher~\& Guralnick \cite[Theorem~2]{AG89}. 

A quick calculation with \textsf{PARI} implies that for ${m\le 15}$ and ${d\le 3^{m/3}}$ we have ${\log\Delta_0\le \exp(m/\log m)}$, where~$\Delta_0$ is from Proposition~\ref{prdeg}. Now assume that ${m\ge 16}$. Estimate~\eqref{elale} implies that, when ${d\le 3^{m/3}}$, we have 
\begin{align*}
\log\Lambda_K&\le \exp\left(1.7\frac{\log 3}{3} \frac{m}{\log m -\log(3/\log3)}\right)\\
&\le \exp\left(1.7\frac{\log 3}{3} \frac{\log16}{\log 16 -\log(3/\log3)} \frac{m}{\log m}\right)\\
&\le \exp\left(0.98 \frac{m}{\log m}\right), 
\end{align*}
which is even better than wanted. 
\end{proof}

\subsection{Open Questions}
\label{ssopen}

In spite of the results of Subsection~\ref{ssestimchb}, the Chevalley-Bass number of a field remains a mysterious quantity. In particular, we do not know its exact value for a single field. Here are some questions that we would like to have answered.

\begin{enumerate}
\item
What is the Chevalley-Bass number of~$\Q$? and of $\Q(i)$?

\item
Is finding the Chevalley-Bass number of a given number field decidable? Because of the results of Subsection~\ref{ssestimchb}, this reduces to the following formally easier problem: given a number field~$K$ and a positive integer~$\Lambda$,  decide whether or not~$\Lambda$ is suitable for~$K$.  

\item
Does it exist a field~$K$ with ${\Lambda_K=1}$? (Perhaps, $\Q(i)$ is such field.)

\item
Does it exist a field~$K$ with~$\Lambda_K$ divisible by an odd prime? 

\item
Can the estimate of Proposition~\ref{prspl} be refined? We believe that an estimate of the shape $\exp\exp(O((m/\log m)^{1/2}))$ must hold. 
\end{enumerate}

We hope to see some of this questions answered in not too distant future.


{\footnotesize

\bibliographystyle{amsplain}
\bibliography{chevbass}

}
\end{document}